\newtheorem{thm}{Theorem}[section]
\newtheorem*{thm*}{Theorem}
\newtheorem{prop}[thm]{Proposition}
\newtheorem{lem}[thm]{Lemma}
\newtheorem{question}[thm]{Question}
\newcommand{\surf}{S} 
\newcommand{\Mod}{\mathsf{Mod}}
\newcommand{\MCG}{\Mod(\surf)} 
\newcommand\Fricke{\mathfrak{F}(\surf)}   
\newcommand\mm{\mathfrak{M}}
\newcommand{\RMS}{\mm(\surf)} 
\newcommand\TT{\mathfrak{T}}
\newcommand{\Teich}{\TT(\surf)} 
\newcommand\PGL{\o{PGL}}
\newcommand{\tG}{\widetilde{G}}
\renewcommand{\o}{\mathsf}
\newcommand\R{\mathbb R}      
\newcommand\Z{\mathbb Z}      
\newcommand\C{\mathbb C}
\newcommand\Aut{\o{Aut}}    
\newcommand\Out{\o{Out}}
\newcommand{\Rep}{\mathsf{Rep}}
\newcommand{\RepG}{\Rep(\pi,G)}     
\newcommand{\RepGt}{\RepG_\tau}    
\newcommand\tM{\widetilde M}
\newcommand\dev{\o{dev}}
\newcommand\hol{\o{hol}}
\newcommand\Inn{\o{Inn}}
\newcommand\Hom{\o{Hom}}
\newcommand{\DGXS}{\mathfrak{D}_{(G,X)}(\surf)}  
\newcommand{\SLtR}{\SL(2,\R)}
\newcommand{\SLtC}{\mathsf{SL}(2,\C)}
\newcommand{\Uone}{\mathsf{U}(1)}
\newcommand{\SUtwo}{\mathsf{SU}(2)}
\newcommand{\SL}{\mathsf{SL}}
\newcommand{\Htwo}{\mathsf{H}^2}
\newcommand{\PGLtR}{\PGL(2,\R)}
\newcommand{\Ker}{\o{Ker}}
\newcommand{\inv}{^{-1}}
\newcommand{\comp}{c}
\newcommand{\EGS}{\mathfrak{E}_G(\surf)} 
\newcommand{\FolGS}{\mathscr{F}_G(\surf)}   
\newcommand{\A}{\mathsf{A}}
\newcommand{\U}{\mathbb{U}}
\newcommand{\UTk}{\U^\comp\Teich}
\newcommand{\URk}{\U^\comp\RMS}
\newcommand{\UE}{\U^\comp_\tau\EGS}
\newcommand{\MV}{\mu^\comp}
\newcommand{\WP}{\mathsf{WP}}
\newcommand{\UTWP}{\U^\WP\Teich} 
\newcommand{\UWPS}{\U^{\WP}\RMS}
\newcommand{\UEWP}{\U^{\WP}_\tau\EGS}
\begin{document}
\title[Mixing Moduli Spaces and Flat Bundles]
{Mixing flows on moduli spaces of flat bundles over surfaces}

\author[Forni]{Giovanni Forni}
\address{Department of Mathematics, University of Maryland,
College Park, MD 20742 USA}
\email[Forni]{gforni@math.umd.edu}
\author[Goldman]{William M. Goldman}
\email[Goldman]{wmg@math.umd.edu}

\dedicatory{\centerline{Dedicated to Nigel Hitchin on his seventieth birthday}}

\setcounter{tocdepth}{3}

\keywords{moduli space, locally homogeneous geometric structure,  flat connection, 
Teichm\"uller space, Weil-Petersson space, ergodic, mixing, geodesic flow, mapping class group}
\subjclass[2010]{Primary: 
58D27 Moduli problems for differential geometric structures;
Secondary: 37A99 Ergodic Theory, 
57M50 Geometric structures on low-dimensional manifolds,
22F50, Groups as automorphisms of other structures}
\date{\today}

\begin{abstract}
We extend Teichm\"uller dynamics to a flow on the total space of a flat bundle of 
deformation spaces $\RepG$ of representations of the fundamental group $\pi$
of a fixed surface $\surf$. 
The resulting dynamical system is a continuous version of the action of the mapping class group $\MCG$ of $\surf$ on $\RepG$.
We observe how ergodic properties of the $\MCG$-action relate to this flow. 
When $G$ is compact, this flow is strongly mixing over each component of 
$\RepG$ and of each stratum of the Teichm\"uller unit sphere bundle over the Riemann
moduli space $\RMS$. We prove ergodicity for the analogous lift of the Weil-Petersson
geodesic local. flow.
\end{abstract}

\thanks{
The authors gratefully acknowledge research support from NSF Grants 
DMS1406281,
and  DMS-1600687 
as well as the Research Network in the Mathematical Sciences DMS1107367 (GEAR). 
The author(s) also would like to thank the Isaac Newton Institute for Mathematical Sciences, Cambridge, 
for support and hospitality during the programme 
{\em Nonpositive curvature group actions and cohomology\/}  where this paper was completed. 
This programme was supported by EPSRC grant no EP/K032208/1.
}

\maketitle

\pagebreak
\tableofcontents
\thispagestyle{empty}

\section{Introduction}
This note develops a family of dynamical systems arising from
moduli problems in low-dimensional geometry and gauge theory. 
Moduli spaces of flat connections (or equivalently of representations of the
fundamental group) are one example.
Another example arises from the classification of {\em locally homogeneous geometric structures.\/}
Group actions arise whose complicated dynamics suggest
that the dynamical systems themselves be viewed as the solution of the classification problem, 
rather than the quotient {\em moduli spaces\/} which may be intractable.

This classification is modeled on the classification of Riemann surfaces by the {\em Riemann moduli space $\RMS$,\/}
the points of which correspond to biholomorphism classes of Riemann surfaces with fixed topology $S$.
Although $\RMS$ is complex algebraic variety, 
it has proved useful to regard $\RMS$ as the quotient of 
a more tractable complex manifold, the {\em Teichm\"uller space $\Teich$,\/}
by the mapping class group $\MCG$, 
a discrete group of biholomorphisms acting properly on $\Teich$.

This paper initiates a general program for analyzing these classification problems.
Earlier work (see, for example, Goldman~\cite{MR2264541}) developed the study of the action of 
$\MCG$ on deformation spaces $\RepG$ of representations, 
where $\pi = \pi_1(S)$.
In some cases, the action is proper, with a tractable quotient.
Such is the case of the component $\Fricke$ (when $G=\PGLtR$) of marked hyperbolic structures on $S$,
where the uniformization theorem identifies the quotient with $\RMS$.
In other cases (such as when $G$ is compact) the action is chaotic,
exhibiting dynamical complexity. 
Here we propose replacing this discrete group action by an action of a Lie group 
(either $\SLtR$ or its subgroup $\A$ consisting of diagonal matrices), 
and exploiting deep results from the well-developed theory of Teichm\"uller dynamics to obtain finer dynamical information.  

The main new result of this paper is the following observation.

Let $G$ be a 
Lie group, and $\surf$ a closed orientable hyperbolic surface with fundamental group $\pi$. 
Choose a connected component  $\RepGt$ of the space $\RepG$, 
and a connected component $\URk$ of a stratum of the unit-Teichm\"uller sphere bundle $\U\RMS$ over the Riemann moduli space $\RMS$.
Denote the natural flat $\RepGt$-bundle over $\URk$ by $\UE$.
Horizontally lift the Teichm\"uller geodesic flow from $\UTk$ to a flow $\Phi$ on $\UE$, 
and consider its restriction $\Phi^c$ to the connected component $\URk$.

Recall that a flow $\Phi$ on a probability space $(X,\mu)$ is {\em strongly mixing\/}
if for all measurable sets $A,B\subset X$, 
\[
\lim_{t\to\infty} \mu(A\cap \Phi_t(B)) = \mu(A)\mu(B). \]
This is a stronger condition than weakly mixing, which in turn is stronger than ergodicity.
For convenience of the reader, we recall that $\Phi$ is {\em weakly mixing\/} if
if for all measurable sets $A,B\subset X$, 
\[
\lim_{T\to\infty} \frac{1}{T} \int_0^T \Big\vert \mu(A\cap \Phi_t(B)) - \mu(A)\mu(B) \Big\vert dt  = 0, \]
and is {\em ergodic\/} if
\[
\lim_{T\to\infty} \Big( \frac{1}{T} \int_0^T \mu(A\cap \Phi_t(B)) - \mu(A)\mu(B) \Big) dt  = 0. \]
By the Birkoff-Khinchin pointwise ergodic theorem, ergodicity is equivalent to
the condition that every $\Phi$-invariant measurable subset either has measure zero
or its complement has measure zero.
One consequence of ergodicity is that if an invariant measure $\mu$ is ergodic, 
then the orbit of $x$ in the support of $\mu$ is dense, for $\mu$-almost every $x$.
(For further background see one of the standard texts, such as Katok-Hasselblatt~\cite{MR1326374},
Petersen~\cite{MR833286} or Walters~\cite{MR648108}.)

\begin{thm*}
Suppose $G$ is a compact connected Lie group.
Consider a connected component of a stratum $\URk$ of $\U\RMS$ and a connected component
$\RepGt$ of $\RepG$. 
The flow $\Phi^c$ on $\UE$  is strongly mixing with respect to a smooth invariant probability measure on $\UE$. 
\end{thm*}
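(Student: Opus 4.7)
My plan is to embed the flow $\Phi^c$ into a larger $\SLtR$-action on $\UE$ and then derive strong mixing from ergodicity via the Howe--Moore (Moore ergodicity) theorem. This is natural because the stratum $\URk$ carries a canonical $\SLtR$-action from its identification with a component of a stratum of quadratic differentials over $\RMS$, and the Teichm\"uller geodesic flow is precisely the restriction to the diagonal subgroup $\A$. Since $\UE\to\URk$ is a flat bundle, parallel transport along paths traced by $\SLtR$ horizontally lifts to an action (of $\SLtR$ or its universal cover) on $\UE$, restricting to $\Phi^c$ on $\A$. The invariant probability measure is the product of the Masur--Veech measure on $\URk$ with the normalized Goldman symplectic volume on $\RepGt$ (finite because $G$ is compact, hence $\RepGt$ is compact); this is $\SLtR$-invariant since Masur--Veech is, and the monodromy of the flat bundle is by the symplectic-volume-preserving $\MCG$-action.

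The crux is ergodicity of this $\SLtR$-action on $(\UE,\mu)$. I would pull back to $\UTk\times\RepGt$, where an invariant function $\tilde f$ becomes $\SLtR$-invariant in the first coordinate and diagonally $\MCG$-invariant. Combining ergodicity of $\SLtR$ on the probability space $\URk=\UTk/\MCG$ (Masur--Veech) with ergodicity of the $\MCG$-action on each component $\RepGt$ for compact $G$ (Goldman, Pickrell--Xia), a conditional-expectation / Fubini argument should force $\tilde f$ to be essentially constant. A cleaner organization is to decompose $L^2(\RepGt)$ into $\MCG$-isotypic (Peter--Weyl-type) components built from matrix coefficients of $\pi\to G$: the trivial component reproduces $L^2(\URk)$, where the Teichm\"uller flow is already known to be mixing, while each non-trivial component yields a flat unitary bundle over $\URk$ whose horizontal geodesic flow must carry no invariant $L^2$-section; this reduces precisely to the absence of $\MCG$-fixed vectors in that component, which is the content of ergodicity of $\MCG$ on $\RepGt$.

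Once ergodicity is in hand, the unitary representation of $\SLtR$ on $L^2_0(\UE)$ has no non-trivial invariant vector, so by Howe--Moore every matrix coefficient $\langle\pi(g)f,h\rangle$ vanishes as $g\to\infty$ in $\SLtR$. Specializing to $g=a_t\in\A$ gives strong mixing of $\Phi^c$ on each component of each stratum.

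The main obstacle is the ergodicity step: the two inputs are statements of very different character --- a smooth $\SLtR$-equivariant flow on the base versus a discrete chaotic action on the fiber --- and combining them requires either a delicate measure-theoretic argument on the non-compact cover $\UTk\times\RepGt$ or a harmonic-analytic reduction that isolates each $\MCG$-mode of the fiber and rules out invariant sections mode by mode. A subtlety one must handle carefully is that $\MCG$-orbits on $\RepGt$ are not proper, so in principle there could be $\MCG$-equivariant measurable sections $\URk\to\RepGt$ producing hidden $\SLtR$-invariant functions on $\UE$; showing these do not occur is where the Goldman--Pickrell--Xia ergodicity must be exploited in its strongest form.
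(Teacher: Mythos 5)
Your overall architecture matches the paper's: extend $\Phi^c$ to the horizontal $\SLtR$-action on $\UE$, take the product of the Masur--Veech measure with the normalized symplectic volume on $\RepGt$, prove ergodicity of the $\A$-action, and upgrade to strong mixing by Moore's theorem. The gap is in the ergodicity step, and it is not merely a matter of writing out details: the input you cite --- ergodicity of $\MCG$ on $\RepGt$ --- is strictly weaker than what is needed. An $\A$-invariant function on $\UE$ pulls back to a function on $\UTk\times\RepGt$ invariant under $\A\times\{1\}$ and under the diagonal $\MCG$-action. Your Fubini sketch would require the $\A$-action on the infinite-measure cover $\UTk$ itself (not on the quotient $\URk$) to be ergodic, which is not what Masur--Veech provides and is not known; and your isotypic reduction is wrong as stated: an invariant $L^2$-section of the flat bundle attached to a nontrivial $\MCG$-subrepresentation $H\subset L^2(\RepGt)$ is an $\A$-invariant, $\MCG$-equivariant map $\UTk\to H$, and ruling these out is not the same as $H$ having no $\MCG$-fixed vector. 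Concretely, if the $\MCG$-action on $\RepGt$ admitted a nonconstant eigenfunction $f$ with $f\circ\gamma=\chi(\gamma)f$ for some character $\chi$ of $\MCG$, and $\UTk$ carried an $\A$-invariant eigenfunction with character $\bar\chi$, their product would descend to a nonconstant $\A$-invariant function on $\UE$ even though $\MCG$ acts ergodically on $\RepGt$. Ergodicity alone does not exclude such eigenfunctions, which is exactly why the product of two ergodic systems need not be ergodic.

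The missing ingredients, which are what the paper uses, are: (i) the $\MCG$-action on $(\RepGt,\nu)$ is not just ergodic but \emph{weakly mixing} (``doubly ergodic''), which Goldman obtains from ergodicity on the component $\Rep(\pi,G\times G)_{\tau\times\tau}$ together with the identification $\Hom(\pi,G\times G)=\Hom(\pi,G)\times\Hom(\pi,G)$ and of the symplectic measure there with $\nu\times\nu$; and (ii) the multiplier criterion of Glasner--Weiss: the diagonal action on the Cartesian product of an ergodic probability space with a weakly mixing one is ergodic. Applying (ii) to the ergodic $\A\times\MCG$-action on $(\UTk,\hat\MV)$ and the weakly mixing $\A\times\MCG$-action on $(\RepGt,\nu)$ (with $\A$ acting trivially) gives ergodicity on the product, hence of the $\A$-action on the quotient $\UE$; Moore's theorem then yields strong mixing as you say. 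You correctly sensed that the Goldman--Pickrell--Xia input must be exploited ``in its strongest form''; the precise form you need is weak mixing of the fiber action, and without naming and using it the argument does not close.
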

\noindent
The invariant smooth probability measure on $\UE$ is the 
measure induced from the symplectic measure $\nu$ on the fiber and the Masur-Veech measure $\MV$ on the base.

Similar techniques enable us to deduce ergodicity for the analogous lift of the Weil-Petersson
geodesic local flow:
\begin{thm*}
The horizontal local flow obtained by lifting the Weil-Petersoon geodesic flow from the 
Weil-Petersson unit tangent bundle $\UEWP$ to the flat $\RepGt$-bundle  
$\UEWP$ is ergodic with respect to a smooth invariant probability measure.
\end{thm*}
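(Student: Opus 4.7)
My plan is to derive ergodicity of the horizontal lifted flow by combining two independent ergodicity inputs: ergodicity of the Weil--Petersson geodesic flow on the base $\UWPS$, which is the theorem of Burns, Masur, and Wilkinson, and ergodicity of the $\MCG$-action on the fiber $\RepGt$ when $G$ is compact, due to Goldman (and refined by Pickrell--Xia). The goal is to upgrade these two statements into ergodicity of the bundle extension, in direct analogy with the previous theorem but using ergodicity instead of mixing on the base.

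I would first realize $\UEWP$ as the quotient $(\UTWP \times \RepGt)/\MCG$ by the diagonal action, so that the horizontal lift of the WP flow corresponds to the WP flow on the first factor, acting trivially on the fiber. A lifted-flow-invariant $F \in L^2(\UEWP)$ then pulls back to a $\MCG$-invariant, first-factor-WP-invariant function $\tilde F$ on $\UTWP \times \RepGt$.

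Next I would decompose $\tilde F(x,\rho) = c(x) + \tilde F_0(x,\rho)$, where $c(x) = \int_{\RepGt} \tilde F(x,\rho)\,d\nu(\rho)$ and $\tilde F_0$ has vanishing fiber average. The summand $c$ is automatically $\MCG$-invariant on $\UTWP$ and WP-invariant, hence descends to a WP-invariant $L^2$-function on $\UWPS$, which is almost surely constant by Burns--Masur--Wilkinson.

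The main obstacle is showing $\tilde F_0 \equiv 0$. The pointwise norm $\|\tilde F_0(x)\|_{L^2(\RepGt,\nu)}$ is $\MCG$-invariant and WP-invariant, hence a.s.\ equal to a constant $N \geq 0$, and one wants to rule out $N > 0$. Viewing $\tilde F_0$ as a $\MCG$-equivariant, WP-invariant section of the trivial Hilbert bundle over $\UTWP$ with fiber $L^2_0(\RepGt,\nu)$, the task is to show that this configuration forces $\tilde F_0 = 0$. The technical heart is controlling the monodromy of the flat bundle along typical WP geodesics and showing that it generates, in an appropriate sense, enough of $\MCG$ to combine with the ergodicity of $\MCG$ on $\RepGt$ (equivalently, the absence of nonzero $\MCG$-invariant vectors in $L^2_0(\RepGt,\nu)$) to force $\tilde F_0 = 0$. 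This monodromy analysis, carried out via the coarse geometry of WP geodesics in moduli space (Brock, Brock--Masur--Minsky, Brock--Modami) together with a recurrence/cocycle argument, plays the role that strong mixing of the Teichm\"uller flow plays in the preceding theorem. I expect this cocycle step to be the principal difficulty, since the WP flow lacks the $\SLtR$-homogeneous structure that made Howe--Moore style arguments and Ratner theory available in the Teichm\"uller setting.
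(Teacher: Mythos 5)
There is a genuine gap at exactly the step you identify as the ``technical heart.'' Your setup is right --- realizing $\UEWP$ as $\big(\UTWP\times\RepGt\big)/\MCG$, pulling an invariant $F$ back to a $\MCG$-invariant, flow-invariant $\tilde F$, and splitting off the fiber average $c(x)$, which Burns--Masur--Wilkinson kills. But for the zero-average part $\tilde F_0$ you only feed in \emph{ergodicity} of the $\MCG$-action on $\RepGt$, and ergodicity of the base times ergodicity of the fiber does not yield ergodicity of a product (irrational rotations already show this). Sensing this, you propose to extract the missing input from the monodromy of the flat bundle along typical WP geodesics via the coarse geometry of WP rays plus a cocycle argument; this is not carried out, is far harder than what is needed, and is not the paper's route.

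The missing idea is that the $\MCG$-action on $\big(\RepGt,\nu\big)$ is not merely ergodic but \emph{weakly mixing}: by Goldman's theorem applied to $G\times G$, $\MCG$ acts ergodically on $\Rep(\pi,G\times G)_{\tau\times\tau}\cong\RepGt\times\RepGt$ with the product measure $\nu\times\nu$, which is precisely double ergodicity, i.e.\ absence of nonconstant finite-dimensional invariant subspaces in $L^2_0(\RepGt,\nu)$. Given that, the multiplier criterion (Glasner--Weiss: the diagonal action on the product of a weakly mixing system with any ergodic system is ergodic) applies to the $\R\times\MCG$-action on $\UTWP\times\RepGt$ --- ergodic on the first factor because the WP flow on $\UWPS$ is ergodic, weakly mixing on the second with the flow acting trivially --- and immediately forces $\tilde F_0=0$; passing to the $\MCG$-quotient gives ergodicity on $\UEWP$. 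This is word-for-word the argument of the lemma in the Teichm\"uller case, which, contrary to your closing remark, makes no use of the $\SLtR$-structure: that structure enters only afterwards, via Moore's theorem, to upgrade ergodicity to strong mixing --- and its absence here is exactly why the paper claims only ergodicity, not mixing, for the Weil--Petersson lift. So no monodromy or coarse-geometric analysis is needed; what is needed, and what your proposal omits, is the weak mixing of the fiber action.
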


%

Finally, we dedicate this paper to Professor Nigel J.\ Hitchin, 
whose pioneering work on moduli spaces has had a profound effect on this field.
In particular, his paper~\cite{MR1065677} pointed the way to use the variation of the refined
geometry of $\RepG$ over Teichm\"uller space to obtain finite-dimensional representations
of $\MCG$.

\section{Classification of Riemann surfaces: Riemann's moduli space}
A prototype of the classification of these structures is the analogous theory of moduli of Riemann surfaces.
Begin by fixing a fixed topological surface $\surf.$
The points of the {\em Riemann moduli space\/} $\RMS$ 
parametrize the different complex structures on $\surf$,
that is, Riemann surfaces having $\surf$ as the underlying topology.
Although $\RMS$ is not generally a manifold, 
it may be understood as the quotient of the {\em Teichm\"uller space\/} $\TT(\surf)$ by the 
{\em mapping class group\/} $\MCG$. 
Recall that the {\em Teichm\"uller space\/} $\TT(\surf)$ comprises equivalence classes of 
Riemann surfaces $M$, 
together with a homotopy class of homeomorphisms $\surf \xrightarrow{~f~} M$ 
(called {\em markings\/}). 
Marked Riemann surfaces $(M_1,f_1)$ and $(M_2,f_2)$ are {\em equivalent\/}
if there is a biholomorphism $M_1 \xrightarrow{~\phi~} M_2$ such that $\phi\circ f_1$
is isotopic to $f_2$. 
The {\em marked Riemann surfaces\/} $(M,f)$ are considerably easier to study:
for example $\TT(\surf)$ is a contractible complex manifold.
Then $\RMS$ is the quotient of $\TT(\surf)$ by the {\em mapping class group\/}
\[
\MCG := \pi_0 \big( \o{Homeo}(\surf) \big). \]

Riemann is credited for considering the {\em moduli space\/} $\RMS$ 
of Riemann surfaces of fixed topology, 
which is now known to be a quasi-projective variety over $\C$.  
Riemann computed its dimension, and understood its local structure,
long before the modern context and terminology had been developed.
Due to Riemann surfaces with nontrivial automorphisms, 
$\RMS$ fails to be a manifold.
Nonetheless it enjoys the structure of a complex orbifold 
(or {\em $V$-manifold\/} in the sense of Satake~\cite{MR0079769}); 
in particular its underlying topology is Hausdorff.

The classification of Riemann surfaces with the extra structure of a marking
forces us to leave the realm of complex algebraic varieties,
but it provides a more tractable answer, 
since $\Teich$ is a complex manifold.
Riemann's moduli space  $\RMS$ is then the quotient of $\Teich$ by $\MCG$.
Expressing $\RMS$ as a quotient requires the {\em properness\/} of the $\MCG$-action on $\Teich$, 
a result attributed to Fricke at the turn of the twentieth century
(although the context for these types of questions seems not to have been developed at that time).

For background on Teichm\"uller theory, we recommend Hubbard~\cite{MR2245223}.

\section{Classification via deformation spaces of flat structures}
This section summarizes the motivation for our study,
namely the classification of locally homogeneous geometric structures,
and the closely related classification of flat connections.
In many interesting and important cases, 
the classification reduces to a tractable {\em moduli space,\/}
analogous to Riemann's moduli space of Riemann surfaces of fixed topology.
However, in general, this construction leads to a non-Hausdorff quotient.
We therefore shift our attention to the dynamical system arising from the mapping class
group action on the space of {\em marked\/} structures.

\subsection{Ehresmann-Thurston geometric structures}
The study of {\em locally homogeneous geometric manifolds\/} was initiated
by Ehresmann in his 1936 paper~\cite{ehresmann1936espaces}. 
(See \cite{MR2827816} for a modern general survey,
and \cite{Hist} for a historical account.)
These structures on a manifold $M$ are
defined by local coordinates taking values in a model manifold $X$ which enjoys 
a transitive action of a Lie group $G$. 
The $G$-invariant geometry on its homogeneous space $X$ then transplants
locally to $M$. 
A familiar example a {\em flat Riemannian metric,\/}
or a {\em Euclidean structure,\/} where $X$ is Euclidean space
and $G$ is its group of isometries. 
When $G$ is extended to the group of affine automorphisms,
one obtains {\em (flat) affine structures,\/} which are more traditionally described
as flat torsionfree affine connections. 

A natural question is, given a geometry $(G,X)$ and a topology $\surf$,
{\em classify all the geometric structures on $\surf$ modeled on the $G$-invariant
geometry of $X$.\/} 
One would hope for a {\em moduli space\/} for $(G,X)$-structures on $\surf$
analogous to Riemann's moduli space $\RMS$. 

Such {\em Ehresmann structures\/} closely relate to the fundamental group $\pi = \pi_1(\surf)$. 
The charts in the coordinate atlas globalize to a local homeomorphism 
$\tM \xrightarrow{~\dev~} X$,
called the {\em developing map\/} and the coordinate changes globalize to a compatible
{\em holonomy homomorphism\/} $\pi\xrightarrow{~h~} G$. 
The pair $(\dev,h)$ determines the structure, and 
$h$ is unique up to the action of the group $\Inn(G)$ of inner automorphisms of $G$.

The {\em deformation space\/}  $\DGXS$ consists of equivalence
consists of equivalence classes of {\em marked $(G,X)$-structures\/} on $\surf$,
analogous to Teichm\"uller space. 
Associating to a marked $(G,X)$-manifold 
$\surf\xrightarrow{~f~} M$ the conjugacy class of its
{\em holonomy homomorphism\/} $\pi_1(\surf)\xrightarrow{~h~} G$
defines a mapping 
\[
\DGXS 
\xrightarrow{~\mathsf{hol}~}  \mathsf{Rep}(\pi,G).\]
The {\em Ehresmann-Weil-Thurston\/} principle asserts that $\hol$  is (essentially) a local homeomorphism,
with respect to a suitable natural topology  on $\DGXS$,
namely the quotient topology induced from the $C^r$-topology on developing maps,
for $1\le r \le \infty$.
(See \cite{MR2827816,Hist} and the references there for more details.)

\subsection{Flat connections}
The classification of flat connections or {\em flat bundles\/} is similar.
Flat connections on a fixed bundle correspond to a reduction of the structure group
to the discrete topology, in which case the classification corresponds to that of 
representations $\pi_1(\surf) \longrightarrow G$, {\em up to conjugacy.\/}
Taking $\pi$ to be $\pi_1(\surf)$ (or any finitely generated group), 
and $G$ a Lie group, the space $\Hom(\pi, G)$ admits the structure of an $\R$-analytic set.
If $G$ is an algebraic group of matrices, then $\Hom(\pi, G)$ is an affine algebraic set.
Furthermore $\Aut(\pi)\times\Aut(G)$ acts on  $\Hom(\pi, G)$ preserving this structure.

The space of flat $G$-bundles is the quotient space of $\Hom(\pi, G)$ 
by the subgroup $\{1\}\times \Inn(G)$, 
denoted 
\[ \RepG := \Hom(\pi,G)/\Inn(G). \]
Here $\RepG$ is given the quotient topology induced from the classical topology on
$\Hom(\pi,G)$. 
In general $\Inn(G)$ fails to act properly, and $\RepG$ is not Hausdorff. 
Its {\em maximal Hausdorff quotient,\/} in many cases is the Geometric-Invariant-Theory quotient 
$\Hom(\pi,G)//G$, and called the {\em character variety.\/}
We refer to Sikora~\cite{MR2931326} and Labourie~\cite{MR3155540}
for further details.

$\Inn(\pi)$ acts trivially as its action is absorbed by the action of $\Inn(G)$.
Therefore $\RepG$ admits a natural action of the {\em outer automorphism group\/}
\[
\Out(\pi) := \Aut(\pi)/\Inn(\pi). \]
When $\pi$ is the fundamental group $\pi_1(\surf)$ of a surface $\surf$,
then the mapping class group $\MCG$ embeds in $\Out(\pi)$ and therefore acts on $\RepG$. 

\subsection{Hyperbolic geometry on surfaces}\label{sec:HypGeo}
Consider the special case when $G\cong\PGLtR$ is the group of isometries of the hyperbolic plane
$X=\Htwo$.
In this special case, Weil~\cite{MR0137792} proved that $\hol$ embeds the 
{\em Fricke space\/} $\Fricke$ of marked hyperbolic structures on $\surf$ into
an open subset (indeed, a connected component) 
of 
$\RepG$.
The uniformization theorem 
identifies the Fricke space $\Fricke$ with the Teichm\"uller space $\TT(\surf)$.
Furthermore this identification 
$\Fricke\longleftrightarrow \TT(\surf)$ is equivariant with respect to the action of $\MCG$.
Since $\MCG$ acts properly on $\TT(\surf)$, 
its action on $\Fricke$ is proper.


Even when 
$\DGXS$ %
is a nice Hausdorff manifold, 
the $\MCG$-action can be extremely chaotic, with a highly intractable quotient.
By Baues~\cite{MR1680196,MR2181958}, 
the deformation space of complete affine structures on $\surf = T^2$
is homeomorphic to $\R^2$ and $\MCG \cong \o{GL}(2,\Z)$ 
acts on $\R^2$ by the usual linear action. 
The non-Hausdorff quotient $\R^2/\o{GL}(2,\Z)$ admits no nonconstant continuous functions. 

Analogous to the Riemann moduli space $\RMS$ is the quotient \linebreak $\DGXS/\MCG$,
which looks like $\RepG/\MCG$.
However both quotients may well be unmanageable, 
as the previous example shows. 
Rather than passing to the quotient, we propose that the classification of $(G,X)$-structures
on $\surf$ should be the {\em dynamical systems\/} given by the actions of $\MCG$
on $\DGXS$ and $\RepG$.

\section{Surface group representations}\label{sec:SurfGpReps}
The most detailed information is known when 
$\surf$ is a surface and $\pi$ is its fundamental group.
Under very general conditions~\cite{MR762512}, 
the deformation spaces admit a symplectic/Poisson geometry defined by the topology $\surf$.
This symplectic structure (which is part of a K\"ahler structure) 
was first written down by Narasimhan (unpublished), 
described when $G$ is compact by Atiyah-Bott~\cite{MR702806}
and developed for Lie groups $G$ for which the adjoint representation of $G$ is orthogonal in~\cite{MR762512}.
This geometry is invariant under $\MCG$. 
Denote the smooth measure induced by the symplectic structure by $\nu$.
When $G$ is a compact Lie group, $\nu$ is an invariant finite measure
(Huebschmann-Jeffrey~\cite{MR1277051}) and we normalize $\nu$ to be a probability
measure on each connected component of $\RepG$.

Giving $\surf$ a complex structure, $\RepG$ admits even richer structure.
For example, when $G$ is a compact Lie group, then $\RepG$ inherits a K\"ahler structure
subordinate to the symplectic structure, generalizing the structure of an abelian variety
(the Jacobian) in the simplest case when $G=\Uone$.
(\cite{MR2400111} expounds Higgs bundle theory in the ``trivial'' case of rank $1$.
See also \cite{MR2681690}.)
Fundamental is how these structures deform as a marked Riemann surface 
$\surf\to M$ varies over Teichm\"uller space $\TT(\surf)$.
This variation is the {\em continuous\/} version of the action of $\MCG$ on $\RepG$,
which we view as a {\em discrete\/} object.

For $G$ complex reductive, $\RepG$ is {\em complex symplectic,\/}
which refines to a {\em hyper-K\"ahler structure\/} for a marked Riemann surface $(f,M)$
(Hitchin~\cite{MR887284}).
These represent {\em reductions of the structure group to its maximal compact subgroup,\/}
namely $\mathsf{Sp}(2N,\R) \supset \mathsf{U}(N)$ in the real case and
$\mathsf{Sp}(2N,\C) \supset \mathsf{Sp}(2N)$ in the complex case.
In this way \[ \RepG \times \Teich \longrightarrow \Teich\]
is a family of hyper-K\"ahler manifolds over $\Teich$,
with fixed underlying complex-symplectic structure on the fibers.
The quotient 
\[ \EGS := \bigg(\RepG \times \Teich\bigg)\bigg/\MCG \]
is a flat $\RepG$-bundle over the Riemann moduli space $\RMS$:
\[ \begin{CD}
\RepG @>>> \EGS \\
@. @VVV \\
@. \RMS 
\end{CD}\]

\subsection{The isomonodromic foliation}
The foliation $\FolGS$ of $\EGS$ defining the flat structure is 
induced by the foliation of the covering space 
\[ \RepG \times \Teich \longrightarrow \EGS \]
defined by the projection
\[ \RepG \times \Teich \longrightarrow \RepG. \]
It is {\em dynamically equivalent\/} to the
$\MCG$-action  on $\RepG$ in the following sense:

\begin{prop}
Leaves of $\FolGS$ bijectively correspond to $\MCG$-orbits on $\RepG$.
\end{prop}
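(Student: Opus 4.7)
The plan is essentially to unpack the definitions of the covering space and the foliation, and then verify that the quotient map converts leaves on the cover into leaves downstairs in a way that exactly matches $\MCG$-orbits on $\RepG$.

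First I would describe the leaves on the cover. The product foliation on $\RepG \times \Teich$ has leaves $\{\rho\} \times \Teich$ for each $\rho \in \RepG$. Since $\MCG$ acts diagonally (on both factors), and its action on the second factor preserves $\Teich$, we have $\gamma\cdot(\{\rho\} \times \Teich) = \{\gamma\cdot\rho\} \times \Teich$. Thus the $\MCG$-action permutes the leaves of this foliation, with orbit of the leaf $\{\rho\}\times\Teich$ parametrized by the $\MCG$-orbit of $\rho$ in $\RepG$. The foliation $\FolGS$ on $\EGS$ is by definition the pushdown of this foliation under the quotient map $q: \RepG \times \Teich \to \EGS$.

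Next I would identify the leaves of $\FolGS$. Because $\Teich$ is connected (indeed contractible), the image $q(\{\rho\}\times\Teich)$ is a connected set, consisting of the orbit $\{\rho\}\times\Teich$ quotiented by $\o{Stab}_{\MCG}(\rho)$. By a standard fact in foliation theory, when the foliation on the cover is $\MCG$-invariant and its leaves are connected, the image of each leaf is exactly a leaf of the quotient foliation. So the leaf of $\FolGS$ through $[\rho,\tau]$ is precisely $q(\{\rho\}\times\Teich)$.

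Now I would define the bijection. The map sends the leaf $L = q(\{\rho\}\times\Teich)$ to the $\MCG$-orbit $\MCG\cdot\rho$. To see this is well-defined, suppose $q(\{\rho_1\}\times\Teich) = q(\{\rho_2\}\times\Teich)$. Picking any $\tau_1$, the point $[\rho_1,\tau_1]$ lies in $q(\{\rho_2\}\times\Teich)$, so there is $\gamma\in\MCG$ and $\tau_2\in\Teich$ with $\gamma\cdot(\rho_2,\tau_2) = (\rho_1,\tau_1)$, giving $\rho_1 = \gamma\cdot\rho_2$. The inverse map sends $\MCG\cdot\rho$ to $q(\{\rho\}\times\Teich)$, which is independent of the choice of representative by $\MCG$-invariance of the family of leaves. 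The two maps are inverse to one another by construction.

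Since this proposition really is definitional, the \emph{main ``obstacle''} is just to check one subtle point cleanly: that $q$ sends leaves to leaves (not to properly smaller connected sets that sit inside a larger leaf). This follows from the observation that, since $\Teich$ is connected and the foliation is $\MCG$-invariant, the restriction of $q$ to $\{\rho\}\times\Teich$ is the orbit map under $\o{Stab}_{\MCG}(\rho)$ and hence has connected image, which therefore exhausts the leaf of $\FolGS$ through $[\rho,\tau]$.
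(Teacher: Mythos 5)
Your proof is correct and matches the paper's intent: the paper states this proposition without proof, treating it as an immediate consequence of the definitions of $\EGS$ as the quotient of $\RepG\times\Teich$ by the diagonal $\MCG$-action and of $\FolGS$ as the pushdown of the product foliation. Your unpacking — leaves upstairs are $\{\rho\}\times\Teich$, these are permuted by $\MCG$ according to the action on $\RepG$, and connectedness of $\Teich$ guarantees that each leaf upstairs maps onto a full leaf downstairs — is exactly the verification the authors omit, and the well-definedness check for the bijection is carried out cleanly.
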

\noindent $\FolGS$  is a continuous object corresponding to the $\MCG$-action.
For example, $\MCG$-invariant measures on $\RepG$ correspond to 
invariant transverse measures for $\FolGS$.
However, deep properties of the geometry of $\Teich$ let us 
reduce to a dynamically equivalent action of a {\em connected Lie group.}

\subsection{Extending the Teichm\"uller flow}
Teichm\"uller defined a Finsler metric on $\Teich$ which is the natural $L^1$-metric
on holomorphic quadratic differentials, regarded as (co-)tangent vectors to $\Teich$.
The unit-sphere bundle $\U\Teich$ of $\Teich$ then admits a corresponding geodesic flow 
(which is part of an $\SLtR$-action $\phi$). 
Lift the foliation $\FolGS$ to a folation $\U\FolGS$ on the flat $\RepG$-bundle 
$\U\EGS$ over $\U\Teich$. 
This foliation $\U\FolGS$ contains  an $\SLtR$-action on $\U\EGS$,
whose restriction to the subgroup $\A\subset\SLtR$ of diagonal matrices
is the horizontal extension  (with respect to the flat connection) 
of the {\em Teichm\"uller geodesic flow\/} on $\U\Teich$.
In this way we replace the dynamics of the $\MCG$-action on $\RepG$ by actions of connected Lie groups
on $\U\EGS$.

\subsubsection{Review of Teichm\"uller theory}\label{sec:ReviewTeich}

We briefly review the Teichm\"uller flow,  
referring to Masur~\cite{MR2655331} and Forni-Matheus~\cite{MR3345837} for details and references.

The tangent space to the Teichm\"uller space $\Teich$ at a marked Riemann surface
$M$ identifies with the vector space of holomorphic quadratic differentials on $M$.
The infinitesimal Teichm\"uller metric is the Finsler metric arising from the natural $L^1$-norm 
on quadratic differentials, and we denote the unit sphere bundle by $\U\Teich$.
There is a natural stratification of $\U\Teich$ by complex submanifolds.

The strata of $\U\Teich$ are labeled by the vector of the orders of the zeroes of the holomorphic quadratic differentials 
or by the the vector of the orders of the zeroes of the holomorphic Abelian differentials, 
whenever the quadratic differentials in the stratum are squares of Abelian differentials.  
Generally the strata are disconnected.
We label the connected components  $\UTk$ by an index $c\in\pi_0\big(\U\Teich\big)$.
Kontsevich-Zorich~\cite{MR2000471} describes the connected components for strata of Abelian differentials,
and thus the connected components of strata of quadratic differentials which are squares.
Lanneau~\cite{MR2423309} describes the components of strata of quadratic differentials which are not squares.

Each component $\UTk$ is $\MCG$-invariant,  and their  quotients 
\[ 
\URk := \UTk/\MCG\] 
partition the Teichm\"uller-unit sphere bundle $\U\RMS$ of the Riemann moduli space 
$\RMS = \Teich/\MCG$. 
Masur~\cite{MR644018} and Veech~\cite{MR866707} showed that  every connected component
$\UTk$  (in fact, every stratum) carries a smooth measure $\hat \MV$, whose projection onto 
the corresponding component $\URk$ of the moduli space
has {\em finite} total mass.
We call this measure  the {\it Masur-Veech measure\/} 
and denote it $\MV$. 

A nonzero holomorphic quadratic differential $q$ on a Riemann surface $M$ determines
a conformal Euclidean structure, singular at the zeroes of $q$.
Composing the developing map $\tilde{M} \longrightarrow \R^2$
with a unimodular linear transformation $\R^2 \longrightarrow\R^2$
gives a new singular Euclidean structure, which arises from a holomorphic quadratic
differential on a Riemann surface. 
In particular this $\SLtR$-action preserves each component, 
as well as 
$\MV$.

The restriction of the $\SLtR$ to the subgroup $\A$ comprising 
positive diagonal matrices is the Teichm\"uller geodesic flow.
By \cite{MR644018, MR866707}, 
the Teichm\"uller geodesic flow on the probability space
$(\URk,\MV)$ is ergodic.
Moore~\cite{MR880376} proved an ergodic $\R$-action
on a probability space which extends to a measure-preserving action of $\SLtR$
is strongly mixing. 
Thus the Teichm\"uller geodesic flow on $(\URk,\MV)$ is strongly mixing.

Veech~\cite{MR866707}  in fact proved the stronger result that the 
Teichm\"uller flow is non-uniformly hyperbolic, 
in the sense that all Lyapunov exponents of its tangent cocycle, 
with the only exception of the one in the flow direction, 
are non-zero. 
Classical results of Pesin's theory 
(see, for example,  Corollary~11.22 of Barreira-Pesin~\cite{MR2348606}) 
then imply that the Teichm\"uller flow has the Bernoulli property with respect to the Masur-Veech measures, 
that is, it is measurably isomorphic to a Bernoulli shift, 
and in particular it has completely positive entropy.

The cleanest statement involving the measurable dynamics concerns the case when 
$\MCG$ acts ergodically on the components of $\RepG$. 
By Goldman~\cite{MR762512} and 
Pickrell-Xia~\cite{MR1915045,MR2015257},
this occurs whenever $G$ is a compact connected Lie group. 
For noncompact $G$, 
these actions display both chaotic dynamics and trivial dynamics,
and the situation is much less understood.

\subsection{Compact groups}\label{sec:CompactGroups}

As a first application of these dynamical ideas, we consider a compact Lie group $G$.

\subsubsection{Connected components of representation varieties}
The connected components of $\RepG$ 
correspond to the elements of the fundamental group 
$\pi_1\big([G,G]\big)$ when $G$ is compact or complex reductive
~\cite{MR952283, MR3155540, MR1380633,MR1206154}.
These may be understood in terms of the {\em second obstruction class\/}
of a representation $\pi\xrightarrow{~\rho~} G$, 
which is the obstruction for lifting a representation
from $G$ to its universal covering group $\tG\longrightarrow G$:
Give  $\pi$ the standard presentation
\[
\pi := \langle A_1,B_1,\dots,A_g,B_g \mid R(A_1,B_1,\dots,A_g,B_g) = 1 \rangle \]
where $R$ denotes the {\em relation\/}
\[
R(A_1,B_1,\dots,A_g,B_g) := [A_1,B_1] \dots [A_g,B_g], \]
$[A,B] := A B A\inv B\inv$, and $g$ denotes the genus of $\surf$.
Lift the images of the generators 
$\rho(A_i)\in G$ (respectively $\rho(B_i)\in G$) to 
$\widetilde{\rho(A_i)}\in \tG$ (respectively $\widetilde{\rho(B_i)\in\tG}$).
The element
\[
\mathfrak{o}(\rho) := R\bigg(
\widetilde{\rho(A_1)},\ \dots,\ 
\widetilde{\rho(B_g)} \bigg)\in \tG \]
lies in 
\[
\pi_1\big([G,G]\big) \subset \Ker \big(\tG \longrightarrow G\big)\] 
and is independent of the choice of lifts
(since lifts differ by elements of $\pi_1(G)\subset \o{Center}(G)$).
This {\em obstruction class\/} 
\[ \RepG \xrightarrow{~\mathfrak{o}} \pi_1\big([G,G]\big)\] distinguishes the connected components
of $\RepG$. 
If $\tau\in\pi_1\big([G,G]\big)$, denote the corresponding component
$\mathfrak{o}\inv(\tau)$ by $\RepG_{\tau}$.

Since 
\[
\pi_1\big([G\times G,G\times G]\big) \cong \pi_1\big([G,G]\big) \times \pi_1\big([G,G]\big),\]
for each $\tau\in\pi_1\big([G,G]\big)$, 
there is a connected component  of $\Rep(\pi,G\times G)$ corresponding to $\tau\times\tau$, 
denoted $\Rep(\pi,G\times G)_{\tau\times\tau}$.

\subsubsection{Mixing for the extended Teichm\"uller flow}
Let $\URk$ be a connected component of a stratum of 
$\U\RMS$ and $\RepGt$ be a connected component of $\RepG$.
As in \S\ref{sec:SurfGpReps}, 
form the flat $\RepGt$-bundle 
\[ \begin{CD}
\RepGt @>>> \UE \\
@. @VVV \\
@. \URk 
\end{CD}\]
over $\URk$ whose total space is the quotient 
\[
\UE := \bigg(\UTk \times\RepGt\bigg)\bigg/\MCG.\]

\begin{thm}\label{thm:Erg}
The horizontal lift of the Teichm\"uller flow to  $\UE$ is strongly mixing.
\end{thm}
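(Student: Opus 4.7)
My plan is to promote the flow $\Phi^\comp$ from an $\A$-action to the full $\SLtR$-action on $\UE$ described in Section~3.2, and then invoke Moore's theorem~\cite{MR880376}: any non-compact $\R$-subaction of an ergodic, measure-preserving $\SLtR$-action on a probability space is strongly mixing. The proof thus reduces to constructing this $\SLtR$-action on $\UE$ and verifying its ergodicity.

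For the construction, I would work upstairs on $\UTk\times\RepGt$. Let $\SLtR$ act by the Masur--Veech action on the first factor and trivially on $\RepGt$, while $\MCG$ acts diagonally. The Masur--Veech action commutes with $\MCG$ on $\UTk$, so the $\SLtR$- and $\MCG$-actions on the product commute; both preserve the product measure $\hat\MV\otimes\nu$, which therefore descends to an $\SLtR$-invariant measure of finite total mass on $\UE=(\UTk\times\RepGt)/\MCG$. Normalizing yields the invariant probability measure on $\UE$ described right after the statement of the theorem, and its restriction to the diagonal subgroup $\A\subset\SLtR$ is, by construction, the horizontal lift $\Phi^\comp$ of the Teichm\"uller geodesic flow.

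For ergodicity, let $F\in L^2(\UE)$ be $\SLtR$-invariant and lift it to $\widetilde F\in L^2(\UTk\times\RepGt,\hat\MV\otimes\nu)$ which is $\SLtR$-invariant on the first factor and $\MCG$-invariant under the diagonal action. Masur--Veech ergodicity of $\SLtR$ on $(\UTk,\hat\MV)$, combined with Fubini, forces $\widetilde F(x,\rho)=g(\rho)$ for almost every $(x,\rho)$ and some $g\in L^2(\RepGt,\nu)$. Diagonal $\MCG$-invariance then gives $g(\gamma\rho)=g(\rho)$ for every $\gamma\in\MCG$. Since $G$ is compact, ergodicity of $\MCG$ on $(\RepGt,\nu)$ (Pickrell--Xia~\cite{MR1915045,MR2015257}) implies $g$, and hence $F$, is constant. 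This establishes ergodicity of $\SLtR$ on $\UE$, and Moore's theorem then yields strong mixing of the $\A$-subflow $\Phi^\comp$.

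The main technical obstacle I anticipate is verifying that the formal descent above really yields a smooth, measure-preserving $\SLtR$-action on a genuine probability space: the stratification of $\U\Teich$ has non-manifold points at the orbifold locus of $\RMS$ and at the strata boundaries, and one must also check that the flat connection on $\UE\to\URk$ from Section~3.2 is compatible with the Masur--Veech $\SLtR$-action on $\UTk$ and not only with its geodesic $\A$-direction. Both issues look manageable by standard techniques from Teichm\"uller dynamics: pass to a finite cover of $\URk$ that eliminates orbifold points, and restrict to the full-measure regular locus of each stratum, on which the bundle structure and $\SLtR$-action are well-defined and smooth.
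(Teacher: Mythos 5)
Your overall architecture --- construct the $\SLtR$-action on $\UE$, prove ergodicity, and invoke Moore's theorem to upgrade the $\A$-subaction to strong mixing --- is exactly the paper's, and your construction of the invariant probability measure on $\UE$ is correct. The gap is in the ergodicity step. You invoke ``Masur--Veech ergodicity of $\SLtR$ on $(\UTk,\hat\MV)$,'' i.e.\ ergodicity \emph{upstairs} on the stratum component over Teichm\"uller space with the infinite lifted measure. That is not what Masur and Veech proved: their theorem gives ergodicity on the finite-measure quotient $(\URk,\MV)$, which is equivalent to ergodicity of the $\SLtR\times\MCG$-action on $(\UTk,\hat\MV)$, not of the $\SLtR$-action alone. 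Since your lift $\widetilde F$ is invariant only under the \emph{diagonal} $\MCG$-action, for fixed $\rho$ the slice $x\mapsto\widetilde F(x,\rho)$ is $\SLtR$-invariant but not $\MCG$-invariant, so Fubini together with the quotient-level ergodicity cannot force it to be constant; the conclusion $\widetilde F(x,\rho)=g(\rho)$ does not follow. (There is also the minor point that a nonzero $\MCG$-invariant function on the infinite measure space $\UTk\times\RepGt$ is not in $L^2$; one must work with measurable functions.)

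The subsequent reduction to ``$g$ is $\MCG$-invariant, hence constant by ergodicity of $\MCG$ on $\RepGt$'' therefore hides the real issue: what is actually needed is ergodicity of the diagonal $\A\times\MCG$-action on the \emph{product} $\UTk\times\RepGt$, and the diagonal product of two ergodic actions of the same group is in general not ergodic. The paper's Lemma closes exactly this gap using the multiplier criterion of Glasner and Weiss: the diagonal product of an ergodic action with a \emph{weakly mixing} action is ergodic. Ergodicity of $\A\times\MCG$ on $(\UTk,\hat\MV)$ is precisely the Masur--Veech theorem, while weak mixing (``double ergodicity'') of the $\MCG$-action on $(\RepGt,\nu)$ follows from ergodicity of $\MCG$ on the component $\Rep(\pi,G\times G)_{\tau\times\tau}$ with the product measure $\nu\times\nu$, via $\Hom(\pi,G\times G)=\Hom(\pi,G)\times\Hom(\pi,G)$. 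This weak mixing input is the ingredient your argument is missing; plain ergodicity of $\MCG$ on $(\RepGt,\nu)$ is not sufficient. Once ergodicity of $\A$ on $\UE$ is established along these lines, your appeal to Moore's theorem is correct.
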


\begin{proof}
The proof is essentially a combination of known results.

The first ingredient is the ergodicity of $\MCG$ on the components $\RepGt$
as in \cite{MR762512,MR1915045,MR2015257,MR2807844}.
Indeed, as noted in \cite{MR762512},
the formal property 
\[
\Hom(\pi, G\times G) = \Hom(\pi, G) \times \Hom(\pi, G), \]
its descent to quotients, and the identification of the symplectic measure
on $\Rep(\pi,G\times G)_{\tau\times\tau}$ as the product measure $\nu\times\nu$ implies {\em weak mixing\/} 
(or {\em double ergodicity\/} as in \cite{MR762512}) of $\MCG$ on $\RepGt$. 

The next ingredient is the $\SLtR$-action on $\UTk$.
This action commutes with the $\MCG$-action on $\UTk$,
and induces an action on the quotient 
\[
\URk = \UTk/\MCG. \]
and the Masur-Veech measure $\MV$ on
$\URk$ is strongly mixing with respect to the restriction of the
$\SLtR$-action to $\A$ (the Teichm\"uller geodesic flow).

The induced $\MCG \times \SLtR$-action on the product $\UTk\times\RepGt$ is
the dynamical system combining these two actions, where the $\SLtR$-factor
acts trivially on $\RepGt$. 
The product of  the lift to $\UTk$ of the Masur-Veech measure 
$\MV$ with the symplectic measure $\nu$ on $\RepGt$ 
defines an invariant smooth measure $\MV_\tau$ on 
$\UTk\times\RepGt$. 
This measure passes to a probability measure on its quotient $\UE$,
which is invariant under the induced $\SLtR$-action. 

\begin{lem}\label{lemma}
$\A$ acts ergodically on $\UE$. 
\end{lem}
\begin{proof}[Proof of Lemma~\ref{lemma}]
The proof crucially uses the {\em multiplier criterion\/} for weak mixing, 
as in in Glasner-Weiss~\cite{MR3568977}:
{\em the diagonal action on a Cartesian product of
any ergodic probability space with a weakly mixing probability space is ergodic.\/}

From the ergodicity of the action of $\A$ on $\big(\URk,\MV\big)$, proved by Masur~\cite{MR644018} and Veech~\cite{MR866707},
it follows immediately, by the definition of ergodicity, that the action of $\A\times\MCG$ on $\big(\UTk,\hat \MV\big)$ is ergodic.
Thus, by the multiplier criterion,  the weak mixing property of the $\A\times\MCG$-action on $\RepGt$ with its symplectic 
measure (and $\A$ acting trivially), discussed above, implies ergodicity of 
the action of $\A\times\MCG$ on the product  $\UTk\times\RepGt$ with the product measure. 
The quotient by the diagonal $\MCG$-action on the product $\UTk\times\RepGt$
yields an ergodic $\A$-action on $\UE$ as claimed. 

\end{proof}

\noindent
{\em Conclusion of the proof of Theorem~\ref{thm:Erg}:} 
Observe that the $\A$-action on $\UE$ is the restriction of the induced $\SLtR$-action on $\UE$.
Now, as in \S\ref{sec:ReviewTeich}, 
apply Moore's theorem~\cite{MR880376} that an ergodic $\A$-action of a probability space, which extends
to a measure-preserving action of $\SLtR$, is strongly mixing.
\end{proof}

Veech's work~\cite{MR866707} on the Teichm\"uller flow and 
Forni's work~\cite{MR1888794} on the lift of the Teichm\"uller flow to the Hodge bundle suggest:

\begin{question}\label{question}
Is the Teichm\"uller flow on $\UE$ (non-uniformly) hyperbolic with respect to the appropriate lift of the Masur-Veech measure?
\end{question}
\noindent
In this case, the flow on $\UE$ is Bernoulli, and the dynamics are completely understood.
Forni~\cite{MR1888794} answers affirmatively Question~\ref{question} in the simplest case of $G=\Uone$.
Similar questions (ergodicity, non-uniform hyperbolicity, whether the flow is Bernoulli) 
can be asked with respect to other $\SLtR$-invariant probability measures on $\UTk$.

\subsection{Noncompact groups}
The situation is much more interesting (and less well understood) for noncompact $G$.

The simplest cases are rather trivial.
As in \S\ref{sec:HypGeo}, let $G=\PGLtR$
and consider the component $\RepGt = \Fricke$ 
comprising discrete embeddings $\pi\to G$
(or, equivalently, marked hyperbolic structures on $\surf$).
Then $\MCG$ acts properly on $\RepGt$, 
with quotient corresponding to $\RMS$ by the uniformization theorem. 
Furthermore the symplectic measure $\nu$ on $\Fricke$ identifies with the Weil-Petersson
volume form on Teichm\"uller space $\Teich$ (see \cite{MR762512}).

In this case each $\MCG$-orbit in $\RepGt/\MCG$ defines a leaf of the foliation $\U\FolGS$,
which maps bijectively to $\U\RMS$ if the orbit is free (that is, if the isotropy group is trivial). 
The leaf space identifies with $\Fricke/\MCG \approx \RMS$.
The lifted $\SLtR$-action preserves these sections, 
and there is no new dynamics here.

For groups $G\supset\PGLtR$, representations in $\RepG$ close to these Fuchsian representations will also lie in open subsets upon which $\MCG$ acts properly. 
The above remarks apply in these more general cases as well.
In particular the {\em Anosov representations\/} defined by Labourie~\cite{MR2221137}
(see also Guichard-Wienhard~\cite{MR2981818}) form open subsets
upon which $\MCG$ acts properly. 
(See Burger-Iozzi-Wienhard~\cite{MR3289711} for a survey of some of this theory.)
In particular $\MCG$ acts properly on the components of $\RepG$ discovered by Hitchin~\cite{MR1174252} in the context of Higgs bundles.
For a good survey of some of these developments, 
and the closely related subject of actions of free group automorphisms on character varieties, 
see Canary~\cite{MR3382028}.

However, in many cases (such as when $G$ is a complex Lie group) 
the boundary of this open set admits a chaotic $\MCG$-action 
(Souto Storm~\cite{MR2240903}, 
Goldman~\cite{MR2026539}, 
Goldman-McShane-Stantchev-Tan~\cite{gmst},
and Maloni-Palesi~\cite{MR3420542}. 
In an important special case, 
Cantat~\cite{MR2553877} proves the existence of orbits 
(when $G=\SLtC$ and $S$ is a punctured torus) whose closure contain both $\SUtwo$-characters
and characters of discrete embeddings.
This uses work of Cantat-Loray~\cite{MR2649343}
which also relates character varieties to the sixth Painl\'eve equation.
See the excellent survey by Inaba-Iwasaki-Saito~\cite{MR2353464} and the references therein for
the closely related theory of dynamics of Painl\'eve VI.

For the other components of $\RepG$, when $G=\PGLtR$, 
it seems plausible that the $\MCG$-action is ergodic.
March\'e-Wolff~\cite{MR3457677} have proved ergodicity when $g=2$.
(Actually they show that the connected component containing the trivial representation has two ergodic components.) 
These results can then be interpreted in terms of the extended Teichm\"uller flow.

\subsection{Weil-Petersson geometry}
In a different direction, 
one can replace Teichm\"uller geometry with Weil-Petersson geometry,
obtaining a different flow with interesting dynamics.
For background on Weil-Petersson theory, 
we recommend Hubbard~\cite{MR2245223} and Wolpert~\cite{MR2641916}.
The {\em Weil-Petersson space\/} is the the complex manifold underlying $\Teich$,
but with its $\MCG$-invariant Weil-Petersson K\"ahler structure.

The unit tangent bundle $\UTWP$ of Weil-Petersson space supports the geodesic local flow
$\tilde\phi^\WP$ of the Riemannian structure underlying the Weil-Petersson K\"ahler structure. 
Since this Riemannian structure is incomplete, 
$\tilde\phi^\WP$ is only a {\em local flow.\/}
However, geodesics fail to be complete only if they converge to {\em noded Riemann surfaces.\/}
In particular the Weil-Petersson geodesic flow is defined almost everywhere for all times.

The Weil-Petersson unit tangent bundle $\UWPS$ of $\RMS$ is
the quotient $\UTWP/\MCG$ which inherits 
a local flow $\phi^\WP$ from $\tilde\phi^\WP$. 
Liouville (Riemannian) measure on $\UWPS$ defines a $\phi^\WP$-invariant probability measure on $\UWPS$.
Burns, Masur and Wilkinson~\cite{MR2993753} proved  that the Weil-Petersson geodesic flow
is Bernoulli, in particular mixing. 

Recently  Burns, Masur,  Matheus and Wilkinson proved that
the rate of mixing is at most polynomial for most strata.
However it it is rapid (super-polynomial) for exceptional strata. 
See the recent survey by Matheus~\cite{Matheus} on the dynamics of the Weil-Petersson  flow and
references therein. 

In contrast, a celebrated result by Avila, Gou\"ezel and Yoccoz \cite{MR2264836} 
(generalized by Avila and Gou\"ezel \cite{MR3071503} to all $\SLtR$-invariant probability measures)
implies the mixing rate of the Teichm\"uller geodesic flow is exponential. 

Let $G$ be a compact Lie group.
Consider the flat $\RepGt$ bundle $\UEWP$ over the unit-sphere bundle $\UWPS$ 
and, as above, horizontally lift $\phi^\WP$ to a local flow $\Phi^\WP$ on $\UEWP$. 
Combining recent results on ergodicity of the Weil-Petersson flow
(Burns-Masur-Wilkinson~\cite{MR2993753}) 
with those of $\MCG$-action on $\RepG$ implies:
\begin{thm}
The horizontal lif $\Phi^\WP$ of the Weil-Petersson geodesic local flow to
$\UEWP$ is ergodic with respect to the Lebesgue measure class.
\end{thm}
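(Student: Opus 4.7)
The plan is to follow the proof of Theorem~\ref{thm:Erg} verbatim, replacing the Teichm\"uller geodesic flow by the Weil-Petersson geodesic local flow $\tilde\phi^\WP$. The key structural difference is that $\tilde\phi^\WP$ is an $\mathbb R$-action not extending to any $\SLtR$-action, so Moore's theorem is not available to boost ergodicity to mixing; accordingly the conclusion is only ergodicity.

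The two ingredients parallel those in Theorem~\ref{thm:Erg}. First, Burns-Masur-Wilkinson~\cite{MR2993753} shows that $\phi^\WP$ on $\UWPS$ is ergodic (in fact Bernoulli) with respect to the normalized Liouville probability measure. Pulled back to $\UTWP$, this is equivalent to ergodicity of the combined $\MCG\times\mathbb R$-action, where $\MCG$ acts by deck transformations and $\mathbb R$ by $\tilde\phi^\WP$; the two actions commute. Second, for compact connected $G$, the $\MCG$-action on $(\RepGt,\nu)$ is weakly mixing: as in the proof of Theorem~\ref{thm:Erg}, the product-structure identification
\[
\Rep(\pi,G\times G)_{\tau\times\tau}\;\cong\;\RepGt\times\RepGt
\]
with product symplectic measure $\nu\times\nu$ upgrades $\MCG$-ergodicity on $\RepGt$~\cite{MR762512,MR1915045,MR2015257} to double ergodicity, and hence to weak mixing. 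Letting $\mathbb R$ act trivially on $\RepGt$, the combined $\MCG\times\mathbb R$-action on $(\RepGt,\nu)$ remains weakly mixing.

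Apply the multiplier criterion of Glasner-Weiss~\cite{MR3568977}: the diagonal action of a single group on the product of an ergodic space and a weakly mixing space is ergodic. Hence $\MCG\times\mathbb R$ acts ergodically on the product $\UTWP\times\RepGt$ equipped with the product of the lifted Liouville measure and $\nu$. The quotient by the diagonal $\MCG$-action is the flat $\RepGt$-bundle $\UEWP\to\UWPS$, and the residual $\mathbb R$-action on this quotient is exactly the horizontal lift $\Phi^\WP$ of $\phi^\WP$; it inherits ergodicity from the action upstairs.

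The main obstacle is technical rather than dynamical, namely the incompleteness of the Weil-Petersson metric that forces $\tilde\phi^\WP$ to be only a local flow. This is handled as in Burns-Masur-Wilkinson: the set of points whose Weil-Petersson geodesic is defined for all $t\in\mathbb R$ has full Liouville measure, and parallel transport along such a geodesic in the flat $\RepGt$-bundle is globally defined, so $\Phi^\WP$ is defined almost everywhere for all times, which is all one needs for ergodicity. The reason one cannot upgrade to (weak or strong) mixing by this argument is precisely the lack of an $\SLtR$-extension; any attack on mixing or mixing rates for $\Phi^\WP$ would require finer input about the measure-theoretic dynamics of $\phi^\WP$ itself.
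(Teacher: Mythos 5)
Your proposal is correct and matches the paper's intended argument: the paper derives this theorem by the same route as Theorem~\ref{thm:Erg} (weak mixing of the $\MCG$-action on $\RepGt$ from the product trick, Burns--Masur--Wilkinson ergodicity of $\phi^\WP$, the Glasner--Weiss multiplier criterion on $\UTWP\times\RepGt$, then quotient by the diagonal $\MCG$-action), stopping at ergodicity precisely because no $\SLtR$-extension is available to invoke Moore's theorem. Your explicit handling of the local-flow/incompleteness issue is a detail the paper only gestures at, but it is consistent with what the authors intend.
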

\noindent
However, due to the lack of a corresponding $\SLtR$-action 
for the Weil-Petersson flow, our methods do not prove mixing.

\bibliographystyle{amsplain}
\bibliography{MixingFlow}

\end{document}